\documentclass[11pt]{amsart}

\usepackage{amscd,amsmath,amssymb,fancyhdr,color}
\usepackage[utf8]{inputenc}
\usepackage{amsfonts}
\usepackage{amsthm}
\usepackage{accents}
\usepackage{graphicx}
\usepackage{float}
\usepackage{subcaption}
\usepackage{verbatim}
\usepackage{indentfirst}
\usepackage{dsfont}
\usepackage{tikz}
\usepackage{tikz-cd}
\usetikzlibrary{matrix}
\usepackage[all]{xy}
\usepackage{enumerate}
\usepackage{extarrows}
\usepackage{csquotes}

\usepackage{scalerel,stackengine}
\stackMath
\newcommand\reallywidehat[1]{%
	\savestack{\tmpbox}{\stretchto{%
			\scaleto{%
				\scalerel*[\widthof{\ensuremath{#1}}]{\kern-.6pt\bigwedge\kern-.6pt}%
				{\rule[-\textheight/2]{1ex}{\textheight}}
			}{\textheight}%
		}{0.5ex}}%
	\stackon[1pt]{#1}{\tmpbox}%
}

\usepackage{faktor}

\usepackage{BOONDOX-uprscr}

\usepackage[backref=page]{hyperref}
\renewcommand*{\backref}[1]{}
\renewcommand*{\backrefalt}[4]{%
	\ifcase #1 (Not cited.)%
	\or        (Cited on page~#2.)%
	\else      (Cited on pages~#2.)%
	\fi}

\hypersetup{
	colorlinks   = true,
	citecolor    = magenta
}

\newcommand{\K}{K\"ahler}

\DeclareMathOperator{\reg}{reg}
\DeclareMathOperator{\sing}{sing}

\numberwithin{equation}{section}

\def\eqref#1{(\ref{#1})}

\newcommand{\R}{{\mathbb R}}

\newcommand{\ei}{\textup{i}}

\def\1{\sqrt{-1}\:}

\newcommand{\cntrct}                
{\hspace{2pt}\raisebox{1pt}{\text{$\lrcorner$}}\hspace{2pt}}


\renewcommand{\dim}{\operatorname{dim}}

\newcommand{\Deck}{\operatorname{Deck}}

\renewcommand{\Re}{\operatorname{Re}}


\newcommand{\ie}{{\em i.e. }}





\newcounter{Mycounter}[section]
\newcounter{lemma}[section]
\setcounter{lemma}{0}

\newcounter{claim}[section]
\setcounter{claim}{0}

\newcounter{sublemma}[section]
\setcounter{sublemma}{0}

\newcounter{corollary}[section]
\setcounter{corollary}{0}

\newcounter{theorem}[section]
\setcounter{theorem}{0}

\newcounter{conjecture}[section]
\setcounter{conjecture}{0}

\newcounter{proposition}[section]
\setcounter{proposition}{0}

\newcounter{definition}[section]
\setcounter{definition}{0}

\newcounter{example}[section]
\setcounter{example}{0}

\newcounter{remark}[section]
\setcounter{remark}{0}

\newcounter{problem}[section]
\setcounter{problem}{0}

\newcounter{question}[section]
\setcounter{question}{0}

\makeatletter

\@addtoreset{equation}{section}

\@addtoreset{footnote}{section}

\makeatother

\usetikzlibrary{arrows,chains,matrix,positioning,scopes}

\makeatletter
\tikzset{join/.code=\tikzset{after node path={%
			\ifx\tikzchainprevious\pgfutil@empty\else(\tikzchainprevious)%
			edge[every join]#1(\tikzchaincurrent)\fi}}}
\makeatother

\tikzset{>=stealth',every on chain/.append style={join},
	every join/.style={->}}

\makeatletter
\newtheorem*{rep@theorem}{\rep@title}
\newcommand{\newreptheorem}[2]{%
	\newenvironment{rep#1}[1]{%
		\def\rep@title{\ref{##1}}%
		\begin{rep@theorem}}%
		{\end{rep@theorem}}}
\makeatother

\newreptheorem{theorem}{Theorem}

\setcounter{tocdepth}{1}

\newtheoremstyle{named}{}{}{\itshape}{}{\bfseries}{.}{.5em}{\thmnote{#3's }#1}
\theoremstyle{named}

\begin{document}
	
	\newpage
	
	\title[lcK spaces and proper open morphisms]{Locally conformally K\" ahler spaces and proper open morphisms}

	\author{Ovidiu Preda}
	\address{Ovidiu Preda \newline
		\textsc{\indent University of Bucharest, Faculty of Mathematics and Computer Science\newline 
			\indent 14 Academiei Str., Bucharest, Romania\newline
			\indent \indent and\newline
			\indent Institute of Mathematics ``Simion Stoilow'' of the Romanian Academy\newline 
			\indent 21 Calea Grivitei Street, 010702, Bucharest, Romania}}
	\email{ovidiu.preda@fmi.unibuc.ro; ovidiu.preda@imar.ro}
	
	\author{Miron Stanciu}
	\address{Miron Stanciu \newline
		\textsc{\indent University of Bucharest, Faculty of Mathematics and Computer Science\newline 
			\indent 14 Academiei Str., Bucharest, Romania\newline
			\indent \indent and \newline
			\indent Institute of Mathematics ``Simion Stoilow'' of the Romanian Academy\newline 
			\indent 21 Calea Grivitei Street, 010702, Bucharest, Romania}}
	\email{miron.stanciu@fmi.unibuc.ro; miron.stanciu@imar.ro}
	
	\thanks{Ovidiu Preda was partially supported by a grant of Ministry of Research and Innovation, CNCS - UEFISCDI, project no.
		PN-III-P1-1.1-TE-2021-0228, within PNCDI III. \\
		\indent Miron Stanciu was partially supported by a grant of Ministry of Research and Innovation, CNCS - UEFISCDI, project no.
		PN-III-P4-ID-PCE-2020-0025, within PNCDI III. \\\\[.1cm]
		{\bf Keywords:} \K\ space, locally conformally \K\ space, stability. \\
		{\bf 2020 Mathematics Subject Classification:} 32S45; 53C55.
	}
	
	\date{\today}

	\begin{abstract}
		In this paper, we prove a stability result for the non-\K\ geometry of locally conformally \K\ (lcK) spaces with singularities. Specifically, we find sufficient conditions under which the image of an lcK space by a holomorphic mapping also admits lcK metrics, thus extending a result by Varouchas about \K\ spaces. 
	\end{abstract}
	
	\maketitle
	
	\hypersetup{linkcolor=blue}
	\tableofcontents

	\section{Introduction}
	
	While the strongest geometric results on complex manifolds may be obtained in the pure \K \ setting, the requirement of the existence of such a metrics in the compact case imposes great topological and geometric restrictions, and thus \K \ manifolds are relatively rare. That is why in the last decades there have been many efforts to find suitable replacements by relaxing, in various ways, the \K \ condition, and looking at non-\K \ Hermitian metrics whose existence is more common but can also lead to nice properties for the manifold and, ideally, classification results. One of the most intensely studied metrics are \textit{locally conformally \K} (\textit{lcK} for short).
	
	A Hermitian metric $\omega$ on the complex manifold $X$ is called \textit{locally conformally K\" ahler (lcK)} if for every point $x\in X$, there exists an open neighborhood $U\ni x$ and a smooth function $f:U\rightarrow \mathbb{R}$ such that $d(e^{-f}\omega)=0$ \ie $e^{-f} \omega$ is \K \ on $U$. This is equivalent to $d\omega=\theta\wedge\omega$, where $\theta$ is a closed 1-form, called the Lee form of the lcK metric $\omega$. If the Lee form $\theta$ is exact, which is equivalent to saying that the function $f$ can be defined globally on $X$, then $\omega$ is called \textit{globally conformally K\" ahler (gcK)}. If $\omega$ is lcK, but not gcK, we call it \textit{pure lcK}.
	These metrics were first defined and studied by Vaisman \cite{vaisman1976}, where he also proved a characterization theorem for lcK manifolds: a complex manifold $X$ admits an lcK metric if and only if its universal cover $\widetilde{X}$ admits a K\" ahler metric $\widetilde{\omega}$ on which the deck group acts by homotheties \ie such that for every $\gamma\in \Deck_{X}(\widetilde{X})$, we have $\gamma^*\widetilde{\omega}=e^{c_\gamma}\widetilde{\omega}$. Some years later, in \cite{vaisman1980} he proved that K\" ahler and pure lcK metrics cannot coexist on compact complex manifolds, with respect to the same complex structure. This is one motivation for studying lcK structures preferentially on compact manifolds, as here we have a clear separation between \K \ and lcK geometry. Since Vaisman published these two papers, there has been great progress in understanding compact lcK manifolds and many useful theorems were obtained. For a recent coprehensive overview of the development of lcK geometry, one may check \cite{OV_book}.  
	
	In contrast to the abundance of results about K\" ahler manifolds, the lack of an all-around good definition of $(p, q)$ differential forms on singular spaces (see \cite{michalek} for a comparison study of different generalizations) made the study of K\" ahler spaces more difficult. Grauert \cite{grauert} was the first to give a definition for K\" ahler metrics on complex analytic spaces. A more restrictive definition was used by Varouchas in \cite{var84}, \cite{var86}, \cite{var89} to obtain some results about modifications of \K \ spaces. Among them, he proves that if some conditions are satisfied, then the image of K\" ahler space by a holomorphic, proper mapping also admits K\" ahler metrics. 
	
	In \cite{PS21}, we adapted Grauert's idea of using families of plurisubharmonic functions and compatibility conditions to define lcK metrics on complex analytic spaces, and to prove that the characterization theorem involving the universal cover is still true for lcK spaces, exactly in the same form. Also, in \cite{PS22}, we proved that Vaisman's theorem on the dichotomy \textit{K\" ahler -- lcK} remains true for compact lcK spaces, with the additional assumption that the space is locally irreducible, and also gave an example which shows that the local irreducibility condition cannot be dropped. Although all the results of \cite{PS21} and \cite{PS22} are proved using Grauert's definition of K\" ahler metrics (and its adaptation to lcK metrics), they all remain true for Varouchas' definition, with the same proof, the additional condition of pluriharmonic differences being easily verified.
	
	In this paper, we give an lcK version of Varouchas' stability results from \cite{var89}. More precisely, we prove:
	
	\begin{reptheorem}{main_theorem}
		Let $(X,\omega,\theta)$ be an lcK space of pure dimension and $X^\prime$ be a normal space, such that there exists $p:X\rightarrow X^\prime$ holomorphic, proper, open and surjective. Assume that $\ker p_* \subset \ker \theta$. 
		Then, $X^\prime$ also admits lcK metrics.
	\end{reptheorem}
	
	The strategy for our proof is to lift $p$ to a morphism $\widetilde{p}:\widetilde{X}\rightarrow \widetilde{X}^\prime$ from a covering space $\widetilde{X}$ of $X$ onto the universal cover $\widetilde{X}^\prime$ of $X^\prime$, and then make use of Varouchas' methods \cite{var89} for $\widetilde{p}$ to obtain a K\" ahler metric on $\widetilde{X}^\prime$. As we need to integrate differential forms on the fibers of $\widetilde{p}$, these must be compact and K\" ahler for the method to work. Thus, for $\widetilde{p}$ to still be a proper mapping and its fibers to be K\" ahler, we need to impose the additional condition $\ker p_* \subset \ker \theta$. This is done so that $\Deck_{X^\prime}(\widetilde{X}^\prime)$ acts by positive homotheties on the newly constructed K\" ahler metric on $\widetilde{X}^\prime$. Finally, the characterization theorem for lcK spaces mentioned above yields that $X^\prime$ has lcK metrics.
	
	\vspace{10pt}
	
	In Section \ref{sec:prelim} we give all the definitions and the results we use. Section \ref{sec:main} is devoted to proving our new result.
	
	\section{Preliminaries}\label{sec:prelim}
	\subsection*{K\" ahler and lcK metrics}
	
	Firstly, we recall the definitions for \K\ and lcK metrics on complex analytic spaces.
	
	\begin{definition}\label{K+lcK}
		Let $X$ be a complex analytic space. 
		\begin{enumerate}
			\item[\textbf{(K)}] A \textit{K\" ahler metric} on $X$ is the equivalence class $\reallywidehat{(U_i,\varphi_i)_{i\in I}}$ of a family such that $(U_i)_{i\in I}$ is an open cover of $X$, $\varphi_i:U_i\rightarrow \mathbb{R}$ is $\mathcal{C}^\infty$ and strictly psh, and $\varphi_i-\varphi_j=\Re(h_{ij})$ on $U_{ij}=U_i\cap U_j$, for every $i,j\in I$, where $h_{ij}$ is holomorphic.
			Two such families are equivalent if their union verifies the compatibility condition on the intersections, described above.
			
			\item[\textbf{(lcK)}] An \textit{lcK metric} on $X$ is the equivalence class $\reallywidehat{(U_i,\varphi_i,f_i)_{i\in I}}$ of a family such that $(U_i)_{i\in I}$ is an open cover of $X$, $\varphi_i:U_i\rightarrow \mathbb{R}$ is $\mathcal{C}^\infty$ and strictly psh, $f_i:U_i\rightarrow\mathbb{R}$ is smooth, and $e^{f_i-f_j}\varphi_i-\varphi_j=\Re(h_{ij})$ on $U_{ij}=U_i\cap U_j$, for every $i,j\in I$. 
			As before, two such families are equivalent if their union verifies the compatibility condition on the intersections. 
		\end{enumerate}
	\end{definition}
	
	\begin{remark}
		The definition of K\" ahler metrics on complex spaces was first introduced by Grauert in \cite[p.346]{grauert}. In his definition, it is required only that $\varphi_i-\varphi_j=\Re(h_{ij})$, where $h_{ij}$ is holomorphic on $U_{ij}\cap X_{\reg}$. 
		
		\ref{K+lcK} is Varouchas' definition \cite[p.23]{var89}. It requires that $\varphi_i-\varphi_j=\Re(h_{ij})$, where $h_{ij}$ is holomorphic on $U_{ij}$ (including the singular points). Hence, it is more restrictive that the one given by Grauert, but they coincide if $X$ is normal. Since we use extensively Varouchas' results from \cite{var89}, we also follow his definition of K\" ahler metric throughout this article.
	\end{remark}
	
	\vspace{10pt}
	
	For lcK forms on singular spaces we also want to define the analogue of its associated Lee form. For this, we have the following:
	
	\begin{definition}\label{TC-1-form-definition}
		\begin{itemize}
			\item Let $X$ be a topological space and consider $(U_i,f_i)_{i\in I}$, consisting of an open cover $(U_i)_{i\in I}$ of $X$ and a family of continuous functions $f_i:U_i\rightarrow\mathbb{R}$ such that $f_i-f_j$ is locally constant on $U_i\cap U_j$, for all $i,j\in I$. The class  
			\[
			\theta =\reallywidehat{(U_i,f_i)_{i\in I}}\in  \check{\mathrm{H}}^0\left(X,\faktor{\mathscr{C}}{\underline{\R}}\right)
			\]
			is called a \textit{topologically closed $1$-form} (\textit{TC $1$-form}). 
			\item We say that a TC 1-form $\theta$ is \textit{exact} if $\theta = \widehat{(X, f)}$ for a continuous function $f:X \rightarrow \mathbb{R}$. In this case, we make the notation $\theta = df$.
			\item 	Let $\omega=\reallywidehat{(U_i,\varphi_i,f_i)_{i\in I}}$ be an lcK metric on a complex space $X$. Then, the TC 1-form $\theta=\reallywidehat{(U_i,f_i)_{i\in I}}$ is called the \textit{Lee form} of $\omega$. If $\theta$ is exact, then $\omega$ is called \textit{globally conformally K\" ahler (gcK)}.
		\end{itemize}
	\end{definition}
	
	\vspace{10pt}
	
	\subsection*{Pushforward and stability} 
	
	We assemble below a few results we will need later. 
	
	The first is a theorem (\cite[p.330 (III)]{remmert1957}) which gives necessary and sufficient conditions under which a holomorphic mapping of complex spaces has pure and equal dimensional fibers.
	
	\begin{theorem}
		\label{remmert_open}
		Let $p:X\rightarrow Y$ be a holomorphic and surjective mapping of complex spaces, with $Y$ locally irreducible. Then, $p$ is an open mapping if and only if $\dim_x p^{-1}(p(x))=\dim_x X - \dim_{p(x)}Y$ for every $x\in X$.
	\end{theorem}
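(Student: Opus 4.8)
\medskip
\noindent\emph{Proof strategy.}
This is a form of Remmert's open mapping theorem, and the plan is to prove the two implications separately, using the following standard local facts:
\textbf{(i)} for a complex space germ $(Z,z)$ and a holomorphic function $g$ with $g(z)=0$ one has $\dim_z g^{-1}(0)\ge\dim_z Z-1$, with equality exactly when $g$ vanishes identically on no irreducible component of $(Z,z)$;
\textbf{(ii)} the local parametrization theorem: every germ $(Y,y)$ admits a finite surjective holomorphic germ $\pi\colon(Y,y)\to(\C^{\dim_y Y},0)$;
\textbf{(iii)} a holomorphic map having a discrete fibre over a point is finite (proper with finite fibres) on a neighbourhood of that fibre;
\textbf{(iv)} a finite surjective holomorphic map $f$ onto a locally irreducible space is open --- for a sufficiently small open set $U$ around a source point, $f(U)$ contains the image of a clopen slice of a small fibre-preimage, and this image is a closed analytic subset of the irreducible local model of the target of the same dimension as the target, hence equals a full neighbourhood.

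\medskip
\noindent\emph{``$p$ open $\Rightarrow$ the dimension equality''.}
Set $y=p(x)$ and $n=\dim_y Y$, which is well defined since $Y$ is locally irreducible, hence pure-dimensional near $y$; we may assume $n\ge1$. Choose $\pi$ as in (ii); by (iv) it is open, so $q:=\pi\circ p\colon X\to\C^n$ is open and $q^{-1}(0)$ agrees with $p^{-1}(y)$ near $x$. I would then peel off the coordinates $q=(q_1,\dots,q_n)$ one by one. Each $q_j$ is a composite of $q$ with a coordinate projection, hence open, and an open holomorphic function vanishes identically on no irreducible component through a point (otherwise it would be constant near a generic point of that component), so by (i) the hypersurface $X_1:=q_1^{-1}(0)$ has $\dim_x X_1=\dim_x X-1$. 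Openness descends to the restriction $q|_{X_1}\colon X_1\to\C^{n-1}$: for $U=X_1\cap\widehat U$ with $\widehat U$ open in $X$ one has $q(U)=q(\widehat U)\cap(\{0\}\times\C^{n-1})$, which is open in $\{0\}\times\C^{n-1}$. Iterating $n$ times gives $\dim_x p^{-1}(y)=\dim_x q^{-1}(0)=\dim_x X-n=\dim_x X-\dim_{p(x)}Y$.

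\medskip
\noindent\emph{``the dimension equality $\Rightarrow$ $p$ open''.}
This is the substantive direction. Openness being local, fix $x$, set $y=p(x)$, $n=\dim_y Y$, $d=\dim_x X-n$, and realize $X$ near $x$ as an analytic subset of $Y\times\C^k$ with $p$ the projection to $Y$; then $p^{-1}(y)=X\cap(\{y\}\times\C^k)$ has dimension $d$ at $x$ by hypothesis. Applying (ii) to this fibre, after a generic $\C$-linear change of coordinates on $\C^k=\C^d\times\C^{k-d}$ the projection $X\cap(\{y\}\times\C^k)\to\C^d$ is finite near $x$; hence the fibre of the projection $X\to Y\times\C^d$ over the point lying below $x$ is discrete at $x$, so by (iii) this projection is finite near $x$. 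Its image is then a closed analytic subset of $Y\times\C^d$ --- which is locally irreducible, being the product of a locally irreducible space with $\C^d$ --- of dimension $\dim_x X=n+d=\dim(Y\times\C^d)$, hence a full neighbourhood. So $X\to Y\times\C^d$ is finite and surjective onto a neighbourhood, thus open by (iv), and composing with the open projection $Y\times\C^d\to Y$ shows that $p$ is open near $x$.

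\medskip
\noindent\emph{Expected main obstacle.}
The crux is fact (iv) --- essentially the quasi-finite (discrete-fibre) case of the theorem, which has to be established directly --- together with the local-parametrization and local-finiteness inputs (ii) and (iii). Once these are in hand, both implications reduce to bookkeeping: in the backward direction the generic choice of linear coordinates on $\C^k$, and in the forward direction the elementary openness-descent identity $q(U)=q(\widehat U)\cap(\{0\}\times\C^{n-1})$.
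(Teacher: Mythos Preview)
The paper does not prove this statement at all: it is quoted in the preliminaries as a classical result of Remmert, with a bare citation to \cite[p.~330~(III)]{remmert1957}, and is then used as a black box in the proof of \ref{main_theorem} to ensure that the fibres of $p$ are pure-dimensional. So there is no ``paper's own proof'' to compare against.

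That said, your sketch is a reasonable outline of a standard proof of Remmert's open mapping theorem. The backward direction is the usual reduction to the finite case via a local graph embedding $X\hookrightarrow Y\times\C^k$ and a generic linear projection to $Y\times\C^d$; the key inputs (local finiteness from discrete fibre, finite image is analytic, and fact~(iv)) are the right ones. One small point worth making explicit: you use that $Y\times\C^d$ is locally irreducible, which is true but deserves a word (the irreducible components of the germ of a product with a smooth factor are the products of the components of the base germ with the smooth factor). In the forward direction, your coordinate-peeling argument is clean, and the openness-descent identity $q(U)=q(\widehat U)\cap(\{0\}\times\C^{n-1})$ is indeed the mechanism that makes the induction go through; just be careful to say that ``$q|_{X_1}$'' really means $(q_2,\dots,q_n)|_{X_1}$.

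In short: your argument is essentially correct and self-contained, whereas the paper simply imports the result.
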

	
	\bigskip
	
	The next result (\cite[Chap. II, Lemma 3.1.2]{var89} combined with \cite[Chap. I, Section 3.3]{var89}) shows that analytical properties of functions are preserved by pushforward through an open finite map.
	
	\begin{lemma}\label{integration_lemma}
		Consider $p:X\rightarrow X^\prime$ a finite, open and surjective morphism of complex spaces. 
		If $\varphi$ is psh, strictly psh, holomorphic or pluriharmonic on $X$, then 
		$$p_*\varphi(x^\prime)=\sum_{x\in p^{-1}(x^\prime)}\varphi(x)$$ 
		has the corresponding properties on $X^\prime$. 
	\end{lemma}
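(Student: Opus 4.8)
The plan is to reduce everything to a computation in one complex variable. First I would observe that the conclusion is local on $X^\prime$ and that, if $p^{-1}(x^\prime_0)=\{x_1,\dots,x_r\}$, one may shrink $X^\prime$ so that $X=V_1\sqcup\cdots\sqcup V_r$ with each $p|_{V_i}\colon V_i\to X^\prime$ again finite, open and surjective; since $p_*\varphi=\sum_i (p|_{V_i})_*(\varphi|_{V_i})$, it suffices to treat the case where $p^{-1}(x^\prime_0)$ is a single point $x_0$. Two points must be kept in mind: the sum defining $p_*\varphi$ has to be read with the local mapping multiplicities $\mu_p(x)$ of $p$ (otherwise $p_*\varphi$ is not even continuous, as $z\mapsto z^2$ already shows), and with this convention $\sum_{x\in p^{-1}(x^\prime)}\mu_p(x)$ is the locally constant degree $d$ of $p$. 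The facts that $p_*\varphi$ is continuous whenever $\varphi$ is, that the branch locus is a thin analytic subset, and that $p$ has the local normal form of a branched covering, are part of the structure theory of finite open morphisms of complex spaces, which I would quote from \cite[Chap.~I, Section~3.3]{var89}.

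For the plurisubharmonic case I would test $p_*\varphi$ along analytic discs. Given a holomorphic disc $\gamma\colon\overline{\Delta}\to X^\prime$ (it is enough to test along these), form the fiber product $Z=X\times_{X^\prime}\Delta$ with projection $q\colon Z\to\Delta$; restricting an open map to the full preimage of a subspace keeps it open, so $q$ is finite, open and surjective, and Theorem \ref{remmert_open} then forces $Z$ to be purely $1$-dimensional. Passing to the normalization $\widetilde Z\to Z$, each connected component is a smooth curve, finite, open and surjective over $\Delta$, hence, by the classical description of branched coverings of a disc, isomorphic to $\Delta$ mapping by $z\mapsto z^{m_j}$ after a coordinate change. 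The composite $\widetilde Z\to Z\to X$ is holomorphic, so $\varphi$ pulls back to a function $u_j$ that is subharmonic on the $j$-th branch, and unwinding the fiber product gives, for $w$ near $0$,
\[
(p_*\varphi)(\gamma(w))=\sum_j\ \sum_{\zeta^{m_j}=w}u_j(\zeta),
\]
the inner sums taken with multiplicity, so that the value at $0$ is $\sum_j m_j u_j(0)=d\,\varphi(x_0)$ as it must be. It then suffices to prove: if $u$ is subharmonic on $\Delta$, then $w\mapsto\sum_{\zeta^m=w}u(\zeta)$ is subharmonic. Off $0$ this is a finite sum of $u$ composed with local holomorphic branches of $w^{1/m}$, hence subharmonic; it is continuous at $0$; and its sub-mean-value inequality at $0$ falls out of the substitution $\psi=(\phi+2\pi k)/m$, which turns $\frac{1}{2\pi}\int_0^{2\pi}\sum_{\zeta^m=re^{i\phi}}u(\zeta)\,d\phi$ into $\frac{m}{2\pi}\int_0^{2\pi}u(r^{1/m}e^{i\psi})\,d\psi\ge m\,u(0)$. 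Hence $p_*\varphi$ is plurisubharmonic (equivalently, invoke removability of the point $0$ for subharmonic functions).

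The remaining three cases would be deduced from this one. If $\varphi$ is pluriharmonic, then $\varphi$ and $-\varphi$ are both plurisubharmonic and $p_*(-\varphi)=-p_*\varphi$, so $p_*\varphi$ and its negative are plurisubharmonic, i.e.\ $p_*\varphi$ is pluriharmonic. If $\varphi$ is strictly plurisubharmonic, I would write it locally as $\psi+\epsilon\rho$ with $\psi$ plurisubharmonic and $\rho$ a fixed smooth strictly plurisubharmonic function on the ambient space, so that $p_*\psi$ is plurisubharmonic by the previous case and, running the same disc computation and using that $p$ is finite hence proper to keep the Hessian estimate uniform, $\epsilon\,p_*\rho$ supplies a strictly plurisubharmonic term. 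If $\varphi$ is holomorphic, $p_*\varphi$ is continuous and over the complement of the thin analytic branch locus $A^\prime$ the map $p$ is an unramified covering, so there $p_*\varphi$ is locally $\sum_j\varphi\circ s_j$ for holomorphic local sections $s_j$, hence holomorphic; a continuous function holomorphic off a thin analytic subset of a locally irreducible complex space is holomorphic by Riemann's theorem (equivalently, when the morphism is flat one argues directly with the trace of $\mathcal{O}_X$ over $\mathcal{O}_{X^\prime}$).

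I expect the main obstacle to be marshalling the structure theory of finite open morphisms rather than any individual case: thinness of the branch locus, the branched-covering normal form, local constancy of $\sum_x\mu_p(x)$, and continuity of $p_*$ on continuous functions. Once these are granted, each of the four assertions becomes the one-variable verification above. The one point where I would be cautious is the holomorphic case when $X^\prime$ is not normal, where the removable-singularity step must be replaced by a more careful local argument.
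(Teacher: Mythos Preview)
The paper offers no proof of this lemma: it is simply quoted from \cite[Chap.~II, Lemma~3.1.2]{var89} together with \cite[Chap.~I, Section~3.3]{var89}. Your proposal is therefore an independent reconstruction of Varouchas' argument rather than a competitor to anything in the present paper.

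Your outline is sound and follows the standard route. The reduction to analytic discs via fiber product and normalization, landing on the one-variable fact that $w\mapsto\sum_{\zeta^m=w}u(\zeta)$ is subharmonic whenever $u$ is, is exactly how the plurisubharmonic case is handled; the pluriharmonic and holomorphic cases then follow as you indicate. Your closing caveat about non-normal $X^\prime$ is not mere caution but a genuine obstruction: for the normalization $p\colon\C\to\{y^2=x^3\}$, $t\mapsto(t^2,t^3)$, which is finite, open, surjective of degree $1$, one gets $p_*(t)=y/x$, the classical weakly holomorphic function that fails to be holomorphic on the cusp. So the holomorphic clause as the paper states it really needs $X^\prime$ normal --- which is all that is ever used here, since in the proof of \ref{main_theorem} the target is a cover of a normal space. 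The only place your argument is thin is the strictly plurisubharmonic case: ``keeping the Hessian estimate uniform'' should be made explicit. A clean way is to embed $X^\prime$ locally in $\C^M$ with coordinate $w$; then $|w\circ p|^2$ is smooth on $X$ with bounded complex Hessian near the compact fiber, so $\varphi-\epsilon\,|w\circ p|^2$ remains psh for small $\epsilon>0$, and since $p_*(|w\circ p|^2)=d\,|w|^2$ one obtains that $p_*\varphi-\epsilon d\,|w|^2=p_*\bigl(\varphi-\epsilon|w\circ p|^2\bigr)$ is psh by the case already established, which is precisely strict plurisubharmonicity of $p_*\varphi$.
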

	
	\bigskip
	
	As to the pushforward through a map which is not finite, the summation above is naturally replaced by integration on the fibers. Firstly, we need the following sufficient condition for geometric flatness \cite[Section 3.3, Prop. 3.3.1]{var89}.
	
	\begin{proposition}\label{prop.3.3.1}
		Suppose that $p:X\rightarrow X^\prime$ is a morphism of complex spaces such that, for some fixed $m\geq 0$, the following conditions are verified:
		\begin{itemize}
			\item $\pi$ is proper, open and surjective;
			\item all fibers of $\pi$ are of pure dimension $m$;
			\item $X^\prime$ is reduced;
			\item $X^\prime$ is normal.
		\end{itemize}
		
		Then $\pi$ is geometrically flat.
	\end{proposition}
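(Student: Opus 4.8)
The plan is to read geometric flatness of $p$ through Barlet's theory of analytic families of cycles and to produce the required family by a generic--flatness--plus--extension argument. Recall that $p$ is \emph{geometrically flat} precisely when the fiber--cycle map
\[
\sigma\colon X^\prime\longrightarrow \mathscr{C}_m(X),\qquad x^\prime\longmapsto p^{-1}(x^\prime),
\]
into the Barlet space of compact $m$-cycles of $X$ is holomorphic, where the fiber $p^{-1}(x^\prime)$ is taken as a cycle, each irreducible component of the set-theoretic fiber being weighted by the local degree of $p$ along it. This is exactly the datum that makes fiber integration of $(p+m,q+m)$-forms to $(p,q)$-forms well defined and compatible with $d$, $\partial$ and $\overline\partial$, which is the use made of the proposition afterwards. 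So everything reduces to proving that $\sigma$ is holomorphic.

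First I would locate the bad locus. Let $Y\subset X$ be the set of points at which $p$ is not flat; it is a closed analytic subset by openness of the flat locus, and $N:=p(Y)$ is a closed analytic subset of $X^\prime$ because $p$ is proper. Since $X^\prime$ is reduced, generic flatness over each irreducible component of $X^\prime$ shows that $N$ is nowhere dense, so $X^\prime_0:=X^\prime\setminus N$ is dense and open and $p$ is flat over it. As all fibers of $p$ have the same pure dimension $m$, there is no jump of fiber dimension: over $X^\prime_0$ the morphism $p$ is flat with $m$-equidimensional fibers over a reduced base, so its fibers form an analytic family of $m$-cycles and $\sigma_0:=\sigma|_{X^\prime_0}$ is holomorphic.

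The heart of the proof is the extension of $\sigma_0$ across $N$, and this is where the normality of $X^\prime$ is indispensable. Properness of $p$ confines all fibers to a fixed relatively compact part of $X$, and together with equidimensionality it makes the $m$-dimensional volumes of the fibers locally uniformly bounded; hence $\sigma_0$ is a locally bounded holomorphic map into the cycle space, i.e.\ a bounded $X^\prime_0$-analytic family of $m$-cycles that stays in a fixed compact part of $\mathscr{C}_m(X)$. By the extension theorem for analytic families of cycles over a normal parameter space --- a bounded analytic family over $X^\prime\setminus N$ extends to an analytic family over $X^\prime$ because $N$ is a nowhere dense analytic set and $X^\prime$ is normal --- $\sigma_0$ extends to a holomorphic map $\sigma\colon X^\prime\to\mathscr{C}_m(X)$. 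It then remains to identify, for $x^\prime\in N$, the cycle $\sigma(x^\prime)$ with the fiber cycle of $p$ over $x^\prime$: its support is contained in $p^{-1}(x^\prime)$ by closedness of the incidence relation and has pure dimension $m$, while, $p$ being open and equidimensional, every component of $p^{-1}(x^\prime)$ has dimension exactly $m$ and arises as a limit of components of nearby fibers, hence must appear in $\supp\sigma(x^\prime)$. Thus $\supp\sigma(x^\prime)=p^{-1}(x^\prime)$ with the correct local-degree multiplicities, which is the assertion.

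I expect the main obstacle to be the extension step: controlling the fiber volumes uniformly so that the family remains in a fixed compact part of $\mathscr{C}_m(X)$, and then invoking the (non-trivial) Riemann-type extension theorem for cycle families over a normal base --- an argument that genuinely fails without normality, as one already sees over a nodal curve. By contrast, the generic-flatness input over the reduced base and the final identification of the limit cycle with the geometric fiber (where openness, and not merely equidimensionality, is what is used) are routine once the extension is available. An alternative route, avoiding the explicit use of the Barlet space, is to flatten $p$ by a modification of $X^\prime$ (Hironaka flattening), transfer the analytic family of cycles from the flat model, and descend it to $X^\prime$ using normality; the crux, namely the descent across the exceptional locus, is again the same normality-based extension.
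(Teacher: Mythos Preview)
The paper does not prove this proposition at all: it is quoted verbatim as \cite[Section~3.3, Prop.~3.3.1]{var89} and used as a black box, so there is no ``paper's own proof'' to compare against. Your sketch is a faithful outline of the standard Barlet-theoretic argument behind Varouchas' result --- generic flatness over a reduced base, holomorphicity of the fiber-cycle map on the flat locus, then Riemann-type extension of the analytic family of cycles across a thin analytic set using normality of $X^\prime$ and local boundedness of fiber volumes --- and that is indeed how the cited reference proceeds. The only point worth tightening is the last identification: you should be explicit that the multiplicities of the extended cycle agree with the local degrees of $p$, not merely that the supports coincide; this follows from continuity of the degree on the flat locus and upper semicontinuity plus constancy of total degree, but it deserves a sentence rather than the phrase ``with the correct local-degree multiplicities''.
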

	
	\vspace{10pt}
	
	Geometric flatness is a notion we do not use directly, but we need it for connecting the previous proposition with the next one, which is the part that we need from \cite[Chap. I, Proposition 3.4.1]{var89}, combined with \cite[Th\' eor\` eme principal]{barlet&varouchas89}. It says that positivity and holomorphicity are again preserved by pushforward via a holomorphic map with good properties. Also, in what follows, we use the definitions of differential forms on complex spaces as given in \cite[Chap. I, Section 1]{var89}. 
	
	\begin{proposition}\label{prop.3.4.1}
		Consider $p:X\rightarrow X^\prime$ holomorphic, proper, geometrically flat, with $m$-dimensional fibers and $\varphi\in A^{m,m}(X)$. Define 
		$$p_*\varphi(x^\prime)=\int_{p^{-1}(x^\prime)}\varphi.$$
		Then,
		\begin{enumerate}[i)]
			\item if $\varphi=\overline{\varphi}$ and $\ei\partial\overline{\partial}\varphi\geq 0$, then $p_*\varphi$ is psh.
			\item  if $\varphi=\overline{\varphi}$ and $\ei\partial\overline{\partial}\varphi \gg 0$, then $p_*\varphi$ is s.psh.
			\item  if $\overline{\partial}\varphi=0$, then $p_*\varphi$ is holomorphic.
		\end{enumerate}
	\end{proposition}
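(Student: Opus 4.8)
The plan is to reduce everything to two facts about integration along the fibres of $p$: that it commutes with $\partial$ and $\overline{\partial}$, and that it carries positive forms to positive forms. Because $p$ is proper, every fibre $p^{-1}(x')$ is compact, and since $\varphi\in A^{m,m}(X)$ while the fibres have pure complex dimension $m$, the integral $p_*\varphi(x')=\int_{p^{-1}(x')}\varphi$ is the fibre integral of a form whose degree matches the real dimension of the fibre, so $p_*\varphi$ is a function on $X'$, real because $\varphi=\overline{\varphi}$. What geometric flatness buys us, through the Barlet--Varouchas theory of analytic families of cycles, is that this function is well defined, that it depends continuously on $x'$ even across $\Sing X'$, and that the fibre-integration operator intertwines the differential calculus upstairs with that downstairs in the sense of currents.

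The first key step is that fibre integration commutes with exterior differentiation. Since the fibres are compact and without boundary, Stokes' theorem annihilates every boundary contribution, giving $d(p_*\psi)=p_*(d\psi)$. As $p$ is holomorphic, $p_*$ respects the bidegree splitting (lowering type by $(m,m)$), so this refines to $\partial(p_*\psi)=p_*(\partial\psi)$ and $\overline{\partial}(p_*\psi)=p_*(\overline{\partial}\psi)$. Applied to $\varphi$ this yields the central identity
$$\ei\,\partial\overline{\partial}(p_*\varphi)=p_*\!\left(\ei\,\partial\overline{\partial}\varphi\right),$$
an equality of $(1,1)$-currents on $X'$. Part (iii) is then immediate: if $\overline{\partial}\varphi=0$ then $\overline{\partial}(p_*\varphi)=p_*(\overline{\partial}\varphi)=0$, so $p_*\varphi$ is holomorphic on $X'_{\reg}$; being continuous and $X'$ normal, it extends holomorphically to all of $X'$ by Riemann's extension theorem.

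The second key step is the positivity of fibre integration. To show the $(1,1)$-current $u:=p_*(\ei\,\partial\overline{\partial}\varphi)$ is $\geq 0$, I would test it against an arbitrary compactly supported strongly positive $(n'-1,n'-1)$-form $\gamma$ on $X'$, where $n'=\dim X'$, via the adjunction $\langle p_*\alpha,\gamma\rangle=\int_X\alpha\wedge p^*\gamma$. The pullback $p^*\gamma$ is again strongly positive, and $\ei\,\partial\overline{\partial}\varphi$ is a positive $(m+1,m+1)$-form of complementary degree, so $\ei\,\partial\overline{\partial}\varphi\wedge p^*\gamma$ is a positive top-degree form on $X$ (here $\dim X=m+n'$) with nonnegative integral. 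This gives (i). For (ii), the hypothesis $\ei\,\partial\overline{\partial}\varphi\gg 0$ makes the integrand strictly positive wherever $\gamma\neq 0$; using that the fibres have pure dimension $m$ everywhere, so they are never degenerate, the pairing is strictly positive and $p_*\varphi$ is s.psh.

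The main obstacle is not this smooth heuristic but its validity across the singularities of $X$ and $X'$. On the regular loci this is classical fibre integration for a proper holomorphic family, and the commutation and positivity hold pointwise. The delicate point is to see that $p_*\varphi$ is psh (resp. holomorphic) as an object on the singular space $X'$ in the sense of the definitions of forms used here, i.e. that the current identities and the positivity established on $X'_{\reg}$ extend across $\Sing X'$ with no spurious mass concentrated there. This is exactly where geometric flatness is indispensable: by the Barlet--Varouchas principal theorem the cycle map $x'\mapsto[p^{-1}(x')]$ is analytic, the fibre integral is continuous on all of $X'$, and the commutation with $\partial\overline{\partial}$ holds as a global identity of currents. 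Combined with properness (compact, boundaryless fibres, so Stokes leaves no residual term), this removes any contribution from the singular set and upgrades the regular-locus computations to the global psh, s.psh and holomorphicity statements on $X'$.
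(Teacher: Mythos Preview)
The paper does not prove this proposition: it is stated in the Preliminaries as a result quoted from the literature, namely \cite[Chap.~I, Proposition~3.4.1]{var89} together with \cite[Th\'eor\`eme principal]{barlet&varouchas89}. There is therefore no ``paper's own proof'' to compare against.

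That said, your sketch is along the lines of how the cited references proceed: the core ingredients are indeed the commutation of fibre integration with $\partial$ and $\overline{\partial}$ (via properness and Stokes), the preservation of positivity under $p_*$ (via the adjunction $\langle p_*\alpha,\gamma\rangle=\int_X \alpha\wedge p^*\gamma$), and the Barlet--Varouchas analyticity of the cycle map to control everything across the singular loci. Two small caveats. First, in part~(iii) you invoke normality of $X'$ to extend from $X'_{\reg}$, but the statement of the proposition does not assume $X'$ normal; in the references the conclusion is obtained directly from the analyticity of the cycle map rather than by a Riemann-type extension. Second, your argument for strict plurisubharmonicity in~(ii) is a bit thin: saying the fibres ``are never degenerate'' does not by itself yield $\ei\partial\overline{\partial}(p_*\varphi)\gg 0$; one needs that for every nonzero tangent direction on $X'$ there is enough variation transverse to the fibre so that the pairing is strictly positive, which is where the openness and equidimensionality of $p$ (built into geometric flatness) are actually used.
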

	
	\vspace{10pt}
	
	The key result in proving the stability theorems on projections of \K\ spaces is the following (\cite[Theorem 3]{var89}):
	
	\begin{theorem}\label{thm2Varouchas}
		Let $(X,\omega)$ be a \K\ space and $m\geq 0$ an integer. Then there exist open sets $U_\alpha\subset X$ ($\alpha \in A$) and $U_{\alpha\beta}^j\subset U_\alpha \cap U_\beta$ ($j\in J_{\alpha\beta}$), which depend only on $X$ and $m$ alone such that:
		\begin{enumerate}[i)]
			\item Any compact $m$-dimensional analytic subset of $X$ is contained in some $U_\alpha$.
			\item Any compact $m$-dimensional analytic subset of $U_\alpha\cap U_\beta$ is contained in some $U_{\alpha\beta}^j$.
			\item There exist elements $\chi_\alpha\in A^{m,m}(U_\alpha,\mathbb{R})$ such that $$\omega^{m+1}=\ei \partial \overline{\partial} \chi_\alpha.$$
			\item There exist elements $\tau_{\alpha\beta}^j\in A^{m,m}(U_{\alpha\beta}^j)$ such that $$\overline{\partial} \tau_{\alpha\beta}^j=0 \text{\ and \ } (\chi_\alpha-\chi_\beta)_{\restriction U_{\alpha\beta}}=\tau_{\alpha\beta}^j+\overline{\tau}_{\alpha\beta}^j.$$
			\item The $\tau_{\alpha\beta}^j$ are $\overline{\partial}$-closed representatives of elements $\xi_{\alpha\beta}^j\in H^m(U_{\alpha\beta}^j,\Omega^m)$.
		\end{enumerate}
	\end{theorem}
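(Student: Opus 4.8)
The plan is to extract parts (iii)--(v) from two explicit local identities valid for any defining family of $\omega$, and to obtain (i)--(ii) --- together with the fact that the open sets depend only on $X$ and $m$ --- by passing to a coarser family of open sets over which $\omega^{m+1}$ still admits a global $(m,m)$-potential.

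Fix a defining family $(V_i,\varphi_i)_{i\in I}$ of the \K\ metric, so $\omega|_{V_i}=\ei\partial\overline{\partial}\varphi_i$ and $\varphi_i-\varphi_j=\Re(h_{ij})$ with $h_{ij}$ holomorphic on $V_{ij}$. Since $\ei\partial\overline{\partial}(\varphi_i-\varphi_j)=0$, the local forms $\ei\partial\overline{\partial}\varphi_i$ glue to a closed $\omega\in A^{1,1}(X,\R)$ (differential forms on complex spaces understood as in \cite[Chap.~I, Section~1]{var89}), so $\omega^m\in A^{m,m}(X,\R)$ and $\omega^{m+1}\in A^{m+1,m+1}(X,\R)$ are global and closed with $\partial\omega^m=\overline{\partial}\omega^m=0$. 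Put $\chi_i:=\varphi_i\,\omega^m\in A^{m,m}(V_i,\R)$. A Leibniz computation using $\partial\omega^m=\overline{\partial}\omega^m=0$ gives
\[\ei\partial\overline{\partial}\chi_i=(\ei\partial\overline{\partial}\varphi_i)\wedge\omega^m=\omega^{m+1},\]
while on $V_{ij}$, writing $\Re(h_{ij})=\tfrac12(h_{ij}+\overline{h_{ij}})$,
\[\chi_i-\chi_j=\Re(h_{ij})\,\omega^m=\tau_{ij}+\overline{\tau}_{ij},\qquad\tau_{ij}:=\tfrac12\,h_{ij}\,\omega^m\in A^{m,m}(V_{ij}),\]
and $\overline{\partial}\tau_{ij}=\tfrac12\overline{\partial}h_{ij}\wedge\omega^m+\tfrac12 h_{ij}\,\overline{\partial}\omega^m=0$, so $\tau_{ij}$ is $\overline{\partial}$-closed and represents a class in $H^m(V_{ij},\Omega^m)$. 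Thus (iii)--(v) already hold for the cover $(V_i)$; the only flaw is that a compact $m$-dimensional analytic subset of $X$ need not lie in a single $V_i$.

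To repair this, let $\mathcal H$ be the sheaf of $(m,m)$-forms annihilated by $\ei\partial\overline{\partial}$, and call a relatively compact open $U\subset X$ \emph{$m$-admissible} if a suitable finite list of cohomology groups of $U$ vanishes --- it is enough that $H^{m+1}(U,\Omega^m)=0$ and $H^q(U,\R)=0$ for all $q>2m$ --- a condition involving only $X$ and $m$. Over such a $U$: the $1$-cocycle $(\chi_i-\chi_j)$, valued in $\mathcal H$ since each $\chi_i-\chi_j=(\varphi_i-\varphi_j)\omega^m$ is $\ei\partial\overline{\partial}$-closed, is a coboundary $\chi_i-\chi_j=\rho_i-\rho_j$ with $\rho_i$ real sections of $\mathcal H$, so $\chi_U:=\chi_i-\rho_i$ is a well-defined real form with $\ei\partial\overline{\partial}\chi_U=\omega^{m+1}$; moreover every real section of $\mathcal H$ over $U$ splits as $\tau+\overline{\tau}$ with $\overline{\partial}\tau=0$ (locally this is a $\partial\overline{\partial}$-Poincar\'e fact --- one reduces to a $d$-closed form, half of which works --- and the same vanishing globalizes it). Then for two $m$-admissible $U,U'$ the form $\chi_U-\chi_{U'}$ is a real section of $\mathcal H$ over $U\cap U'$ (its $\ei\partial\overline{\partial}$ is $\omega^{m+1}-\omega^{m+1}=0$), so over any $m$-admissible $U''\subset U\cap U'$ it splits as $\tau+\overline{\tau}$ with $\overline{\partial}\tau=0$, the $\overline{\partial}$-closed $(m,m)$-form $\tau$ representing a class in $H^m(U'',\Omega^m)$. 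Hence (iii)--(v) hold for any family of $m$-admissible opens together with $m$-admissible opens inside their pairwise intersections.

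It remains to choose such a family with the covering properties (i)--(ii), and this is where fixing $m$ in advance --- so one argues on $m$-dimensional subspaces --- is essential; I expect it to be the main obstacle. The point is that a compact $m$-dimensional analytic subset $Z\subset X$ has $m$-admissible neighborhoods: the coherent sheaf $\Omega^m$ has no cohomology above degree $m$ on the $m$-dimensional space $Z$, and $H^q(Z,\R)=0$ for $q>2m$, so by a continuity argument the corresponding cohomology of $U$ vanishes for all sufficiently small relatively compact $U\supset Z$. Since $X$ is second countable one may take $U$ to be a finite union of members of a fixed countable basis, of which there are only countably many; letting $A$ index the $m$-admissible ones gives a countable family --- independent of $\omega$, depending only on $X$ and $m$ --- such that every compact $m$-dimensional analytic subset of $X$ lies in some $U_\alpha$, and likewise one picks, inside each $U_\alpha\cap U_\beta$, a countable family $(U_{\alpha\beta}^j)_{j\in J_{\alpha\beta}}$ of $m$-admissible opens containing every compact $m$-dimensional analytic subset of $U_\alpha\cap U_\beta$. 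The delicate steps are the passage from the vanishing on $Z$ to $m$-admissibility of a neighborhood (a continuity statement for cohomology, with forms in the singular sense of \cite{var89}) and checking that this bookkeeping yields precisely (i)--(ii); everything else reduces to the two Leibniz identities.
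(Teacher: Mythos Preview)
The paper does not prove this statement; it is quoted verbatim from \cite[Theorem~3]{var89} as a tool used in the proof of the main theorem. So there is no ``paper's own proof'' to compare against. What follows is an assessment of your sketch on its own terms, with an indication of how it relates to Varouchas' actual argument.

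Your computation for (iii)--(v) on the defining cover $(V_i)$ is correct and is exactly the starting point Varouchas uses: $\chi_i=\varphi_i\,\omega^m$ gives $\ei\partial\overline\partial\chi_i=\omega^{m+1}$, and $\chi_i-\chi_j=\tfrac12 h_{ij}\omega^m+\overline{\tfrac12 h_{ij}\omega^m}$ is the desired splitting. This part is fine.

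The genuine gap is in your passage to ``$m$-admissible'' open sets. You assert that the vanishing of $H^{m+1}(U,\Omega^m)$ and of $H^q(U,\R)$ for $q>2m$ suffices to (a) kill the \v Cech $1$-cocycle $(\chi_i-\chi_j)$ in the sheaf $\mathcal H$ of $\ei\partial\overline\partial$-closed real $(m,m)$-forms, and (b) split every real section of $\mathcal H$ over $U$ as $\tau+\overline\tau$ with $\overline\partial\tau=0$. Neither implication is established: controlling $H^1(U,\mathcal H)$ requires an exact sequence relating $\mathcal H$ to sheaves whose cohomology you actually compute, and on a singular space the local $\partial\overline\partial$-lemma you invoke for (b) is not a triviality --- Varouchas devotes a substantial portion of \cite{var89} to setting up the correct sheaves $\mathcal K^{p,q}$, $\mathcal L^{p,q}$ and the exact sequences among them precisely to make such arguments work. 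Your parenthetical ``a suitable finite list of cohomology groups'' is where the real content lies, and you have not identified which groups or why they vanish near a compact $m$-dimensional $Z$.

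The second gap, which you flag yourself, is the continuity-of-cohomology step: passing from vanishing on $Z$ to vanishing on a neighborhood $U\supset Z$. For coherent sheaves this is standard, but $\mathcal H$ is not coherent, and the real-coefficient groups require a separate argument. Varouchas handles (i)--(ii) not by an ad hoc cohomological criterion but via the Barlet space of compact $m$-cycles and its local structure; the open sets $U_\alpha$ arise from a countable basis adapted to that space. Your countable-basis idea is in the right spirit, but without the Barlet-space machinery (or a genuine substitute) the claim that every compact $m$-dimensional analytic subset has an $m$-admissible neighborhood remains unproved.

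In summary: the Leibniz identities are correct and are the easy half of Varouchas' proof; the hard half --- the sheaf-theoretic apparatus needed to globalize over neighborhoods of compact $m$-cycles --- is exactly what your sketch leaves open.
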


	\section{The main result}\label{sec:main}
	
	In this section, we prove our main result on the existence of lcK metrics on images of lcK spaces. In the particular case of finite mappings, our previous result \cite[Thm.4.1]{PS21} says that if $p:X\rightarrow X^\prime$ is holomorphic and finite, and $X^\prime$ admits lcK metrics, then $X$ is also admits lcK metrics. Our theorem, in the case of 0-dimensional fibers, is a kind of reciprocal of this result, in the sense of giving information about the image of an lcK space, instead of the preimage. However, we need some additional conditions to be verified for our proof to work, for which we introduce the following notations: for a mapping $p:X\rightarrow X^\prime$, denote $p_*:\pi_1(X)\rightarrow\pi_1(X^\prime)$ the induced morphism, and for a TC 1-form $\theta$ on a complex space $X$, we denote $$\ker\theta=\left\{ \gamma\in\pi_1(X) \mid \int_\gamma \theta=0\right\},$$ where the integral $\int_\gamma \theta$ is defined as in \cite{PS21}.
	
	\begin{theorem}\label{main_theorem}
		Let $(X,\omega,\theta)$ be an lcK space of pure dimension and $X^\prime$ be a normal space, such that there exists $p:X\rightarrow X^\prime$ holomorphic, proper, open and surjective. Assume that $\ker p_* \subset \ker \theta$. 
		Then, $X^\prime$ also admits lcK metrics.
	\end{theorem}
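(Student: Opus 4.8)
\medskip\noindent\textit{Proof proposal.}\ The plan is to transport the problem to the universal cover of $X^\prime$, where Varouchas' relative-potential machinery applies, and to carry this out $\pi_1(X^\prime)$-equivariantly so that the characterization theorem for lcK spaces of \cite{PS21} returns an lcK metric on $X^\prime$. Let $\pi^\prime\colon\widetilde{X}^\prime\to X^\prime$ be the universal cover and set $\widetilde{X}:=X\times_{X^\prime}\widetilde{X}^\prime$, with $\pi:=\mathrm{pr}_1\colon\widetilde{X}\to X$ and $\widetilde{p}:=\mathrm{pr}_2\colon\widetilde{X}\to\widetilde{X}^\prime$. Then $\pi$ is a covering map (a base change of the \'etale map $\pi^\prime$), so $\widetilde{X}$ is a (possibly disconnected) complex space of the same pure dimension as $X$; and $\widetilde{p}$, being a base change of $p$, is holomorphic, proper, open and surjective, with fibre over $\widetilde{x}^\prime$ isomorphic to $p^{-1}(\pi^\prime(\widetilde{x}^\prime))$, which, by \ref{remmert_open} (applicable since $X^\prime$ is normal, hence locally irreducible, and $p$ is open), is a compact analytic set of pure dimension $m:=\dim X-\dim X^\prime$. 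Since $\widetilde{X}^\prime$ is normal and reduced (these properties being local and $\pi^\prime$ a local homeomorphism), \ref{prop.3.3.1} shows that $\widetilde{p}$ is geometrically flat.

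The hypothesis $\ker p_*\subseteq\ker\theta$ enters when equipping $\widetilde{X}$ with a suitable K\"ahler metric. Because $\ker p_*\subseteq\ker\theta$, the period character of $\theta$ on $\pi_1(X)$ factors through $p_*$, hence descends to a homomorphism on $\im(p_*)\le\pi_1(X^\prime)$; as $\im(p_*)$ has finite index in $\pi_1(X^\prime)$ (for instance via the Stein factorization of $p$) and $\R$ is divisible, this extends to a homomorphism $\chi^\prime\colon\pi_1(X^\prime)\to\R$. Using $\chi^\prime$ to glue the additive constants of the componentwise primitives, one obtains a smooth $f\colon\widetilde{X}\to\R$ with $\pi^*\theta=df$ and $f\circ\gamma-f\equiv\chi^\prime(\gamma)$ for every $\gamma\in\pi_1(X^\prime)$ (acting on $\widetilde{X}$ through its second factor). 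Writing $\omega$ by local potentials $(\varphi_i,f_i)$, the rescaled potentials $e^{f_i\circ\pi-f}\,\varphi_i\circ\pi$, whose rescaling factors are locally constant, then define a genuine \emph{K\"ahler} metric $\widetilde{\omega}_0$ on $\widetilde{X}$, which satisfies $\gamma^*\widetilde{\omega}_0=\rho^\prime(\gamma)\,\widetilde{\omega}_0$ for the positive character $\rho^\prime:=e^{-\chi^\prime}\colon\pi_1(X^\prime)\to\R_{>0}$ and whose restriction to each (compact, pure $m$-dimensional) fibre of $\widetilde{p}$ is again K\"ahler.

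Now apply Varouchas' \ref{thm2Varouchas} to $(\widetilde{X},\widetilde{\omega}_0)$ and this $m$, obtaining open sets $\widetilde{U}_\alpha,\widetilde{U}_{\alpha\beta}^j$ and forms $\chi_\alpha\in A^{m,m}(\widetilde{U}_\alpha,\R)$, $\tau_{\alpha\beta}^j\in A^{m,m}(\widetilde{U}_{\alpha\beta}^j)$ with $\widetilde{\omega}_0^{m+1}=\ei\partial\overline{\partial}\chi_\alpha$, $\overline{\partial}\tau_{\alpha\beta}^j=0$ and $(\chi_\alpha-\chi_\beta)|_{\widetilde{U}_{\alpha\beta}^j}=\tau_{\alpha\beta}^j+\overline{\tau}_{\alpha\beta}^j$. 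Set $V_\alpha:=\{\widetilde{x}^\prime\in\widetilde{X}^\prime:\widetilde{p}^{-1}(\widetilde{x}^\prime)\subseteq\widetilde{U}_\alpha\}$, and similarly $V_{\alpha\beta}^j$; these are open, by properness of $\widetilde{p}$, and, since every fibre of $\widetilde{p}$ is a compact $m$-dimensional analytic set, they cover $\widetilde{X}^\prime$ resp.\ $V_\alpha\cap V_\beta$ by parts i) and ii) of \ref{thm2Varouchas}. Put $\Phi_\alpha:=\widetilde{p}_*\chi_\alpha$ on $V_\alpha$. By part ii) of \ref{prop.3.4.1} (since $\chi_\alpha=\overline{\chi}_\alpha$ and $\ei\partial\overline{\partial}\chi_\alpha=\widetilde{\omega}_0^{m+1}\gg 0$) the $\Phi_\alpha$ are smooth and strictly psh, while on $V_{\alpha\beta}^j$ one has $\Phi_\alpha-\Phi_\beta=\widetilde{p}_*\tau_{\alpha\beta}^j+\overline{\widetilde{p}_*\tau_{\alpha\beta}^j}=\Re\bigl(2\,\widetilde{p}_*\tau_{\alpha\beta}^j\bigr)$ with $2\,\widetilde{p}_*\tau_{\alpha\beta}^j$ holomorphic, by part iii) of \ref{prop.3.4.1}. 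Hence, after the evident refinement of the cover, the potentials $\Phi_\alpha$ define a K\"ahler metric $\widetilde{\omega}^\prime$ on $\widetilde{X}^\prime$, whose associated positive $(1,1)$-form is $\widetilde{p}_*\bigl(\widetilde{\omega}_0^{m+1}\bigr)$.

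It remains to verify that $\Deck(\widetilde{X}^\prime/X^\prime)=\pi_1(X^\prime)$ acts on $\widetilde{\omega}^\prime$ by positive homotheties. For $\gamma\in\pi_1(X^\prime)$ the second-factor action $\widetilde{g}_\gamma$ on $\widetilde{X}$ satisfies $\widetilde{p}\circ\widetilde{g}_\gamma=\gamma\circ\widetilde{p}$, so the change-of-variables formula for fibre integration gives $\gamma^*\bigl(\widetilde{p}_*\widetilde{\omega}_0^{m+1}\bigr)=\widetilde{p}_*\bigl(\widetilde{g}_\gamma^*\widetilde{\omega}_0^{m+1}\bigr)=\rho^\prime(\gamma)^{m+1}\,\widetilde{p}_*\widetilde{\omega}_0^{m+1}$, i.e.\ $\gamma^*\widetilde{\omega}^\prime=\rho^\prime(\gamma)^{m+1}\,\widetilde{\omega}^\prime$, a positive homothety. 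As $\widetilde{X}^\prime$ is the universal cover of $X^\prime$, the characterization theorem of \cite{PS21} then yields an lcK metric on $X^\prime$ (with Lee form the TC $1$-form determined by $\chi^\prime$), which proves the theorem. I expect the real difficulty to lie in the second step and in this last equivariance check: one must choose the primitive of $\pi^*\theta$ so that \emph{all} of the deck group, not merely $\im(p_*)$, acts by homotheties on $\widetilde{\omega}_0$, which is precisely what $\ker p_*\subseteq\ker\theta$ (via the finite-index extension of the period character) makes possible; and, since the potentials $\chi_\alpha$ produced by \ref{thm2Varouchas} are not canonical, the homothety behavior of $\widetilde{\omega}^\prime$ must be read off from the globally defined form $\widetilde{p}_*\bigl(\widetilde{\omega}_0^{m+1}\bigr)$, equivalently from the induced K\"ahler classes on the overlaps $V_\alpha\cap V_\beta$, rather than from the $\Phi_\alpha$ individually. \endproof
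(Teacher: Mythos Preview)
Your overall architecture---pull back to the fibre product over the universal cover, push forward Varouchas' relative potentials, and descend via the characterization theorem of \cite{PS21}---is exactly the paper's. There is, however, a genuine gap in the execution.

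You assert that $\Phi_\alpha=\widetilde{p}_*\chi_\alpha$ is \emph{smooth} and strictly psh ``by part ii) of \ref{prop.3.4.1}''. That proposition only yields strict plurisubharmonicity; fibre integration over a proper geometrically flat family does not in general produce $\mathcal{C}^\infty$ functions (cf.\ the paper's own parenthetical ``but not necessarily $\mathcal{C}^\infty$'' in Case~2). So $\reallywidehat{(V_\alpha,\Phi_\alpha)}$ is only a \emph{weak} K\"ahler metric in Varouchas' sense, and one must invoke \cite[Thm.~1]{var89} to regularize to a genuine K\"ahler metric in the sense of \ref{K+lcK}. This step is missing from your argument.

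Once you insert the regularization, your equivariance check via the global form $\widetilde{p}_*(\widetilde{\omega}_0^{m+1})$ no longer suffices: the smoothing procedure of \cite[Thm.~1]{var89} involves choices (partitions of unity, local modifications) and will in general alter the associated $(1,1)$-current, so the identity $\gamma^*\bigl(\widetilde{p}_*\widetilde{\omega}_0^{m+1}\bigr)=\rho'(\gamma)^{m+1}\widetilde{p}_*\widetilde{\omega}_0^{m+1}$ says nothing about the \emph{smoothed} potentials. This is precisely why the paper organizes the data $H$-equivariantly from the outset: it lifts a cover $(V_j)_j$ of $X'$ to $(V_j^\eta)_{j,\eta}$ on $\widetilde{X}'$ with $\xi^{-1}V_j^\eta=V_j^{\xi^{-1}\eta}$, chooses the $\chi_j^\eta$ so that $\xi^*\chi_j^\eta=e^{(m+1)c_\xi}\chi_j^{\xi^{-1}\eta}$, and then observes that the choices inside the proof of \cite[Thm.~1]{var89} can be made compatibly with this $H$-action (the deck group merely permutes the index set $\{(j,\eta)\}$). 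Your non-equivariant application of \ref{thm2Varouchas} followed by an a posteriori check on the pushed-forward form does not close this loop.

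A smaller point: your detour through ``$\im(p_*)$ has finite index, $\R$ is divisible, extend the character'' is unnecessary. As the paper notes, $\pi_*(\pi_1(\widetilde{X}))=\ker p_*$, so the hypothesis $\ker p_*\subset\ker\theta$ is \emph{equivalent} to $\pi^*\theta$ being exact; the primitive $\widetilde f$ then automatically satisfies $\widetilde f\circ\xi-\widetilde f=-c_\xi$ for a character $c$ on $H=\Deck_X(\widetilde{X})\simeq\Deck_{X'}(\widetilde{X}')=\pi_1(X')$, with no extension argument needed.
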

	\begin{proof}
		Denote $\pi^\prime:\widetilde{X^\prime}\rightarrow X^\prime$ the universal cover of $X^\prime$ and consider $\widetilde{X}=X\times_{X^\prime}\widetilde{X^\prime}$ to be the pull-back of the universal cover $\pi^\prime:\widetilde{X^\prime}\rightarrow X^\prime$ along $p$. Then, we have the following commutative diagram:
		\begin{figure}[H]
			\centering
			\label{diag:thmEmbeddingOutline}
			\begin{tikzpicture}
				\matrix (m) [matrix of math nodes,row sep=3em,column sep=4em,minimum width=2em]
				{
					\widetilde{X} & \widetilde{X^\prime} \\
					(X, \omega, \theta) &  X^\prime \\};
				\path[-stealth]
				(m-1-1) edge node [above] {$\widetilde{p}$} (m-1-2)
				(m-1-1) edge node [left] {$\pi$} (m-2-1)
				(m-1-2) edge node [right] {$\pi^\prime$} (m-2-2)
				(m-2-1) edge node [below] {$p$} (m-2-2);
			\end{tikzpicture}
		\end{figure}
		\noindent where $\widetilde{p}$ is also holomorphic, proper, open and surjective, and $\pi$ is a cover of $X$. Moreover, since $X^\prime$ is normal, $\widetilde{X^\prime}$ is also normal. 	Since $X^\prime$ is normal, it is locally irreducible, hence, by Remmert's open mapping  \ref{remmert_open}, all the fibers of $p$ have pure dimension $m$. Also, by taking the pull-back metric,  $(\widetilde{X},\pi^*\omega,\pi^*\theta)$ is an lcK space. 
		
		Next, we should note that our assumption $\ker p_* \subset \ker \theta$ is equivalent to $\pi^*\theta$ being exact by elementary covering space theory. Indeed, for any $\widetilde{\gamma} \in \pi_1(\widetilde{X})$, we have firstly that $\widetilde{p}_* \widetilde{\gamma} = 0$ as $\widetilde{X^\prime}$ is simply connected, so $\pi'_*\widetilde{p}_* \widetilde{\gamma} = 0$. Equivalently, as the diagram is commutative, $\pi_* \widetilde{\gamma} \in \ker p_*$. On the other hand $\int_{\widetilde{\gamma}}\pi^*\theta = \int_{\pi_*\widetilde{\gamma}} \theta$, so $\pi^* \theta$ is exact \ie $\int_{\widetilde{\gamma}}\pi^*\theta = 0$ for any $\widetilde{\gamma} \in \pi_1(\widetilde{X})$ if and only if $\ker p_* \subset \ker \theta$.
		
		Thus $\pi^*\theta=d\widetilde{f}$ for a smooth function $\widetilde{f}$ on $\widetilde{X}$. This $\widetilde{f}$ also verifies $\widetilde{f}\circ\xi=\widetilde{f}-c_\xi$ for each $\xi\in H:=\Deck_{X}(\widetilde{X})$, where $c_\xi\in\mathbb{R}$.
		Subsequently, $e^{-\widetilde{f}}\pi^*\omega$ is a \K\ metric on $\widetilde{X}$. 
		If $\omega=\reallywidehat{(U_i,\varphi_i,f_i)_{i\in I}}$, then  
		$$\widetilde{\omega}=e^{-\widetilde{f}}\pi^*\omega=\reallywidehat{(U_i^\eta,\varphi_i^\eta)_{i \in I, \eta\in H}},$$
		where $\pi^{-1}(U_j)=\cup_{\eta\in H}U_i^\eta$ is a disjoint union, and $\varphi_i^\eta=e^{f_i\circ\pi-\widetilde{f}}\varphi_i\circ \pi_{\restriction U_i^\eta}$. A simple calculation shows that $\xi^*\widetilde{\omega}=e^{c_\xi}\widetilde{\omega}$ for each $\xi\in H$.
		
		\vspace{10pt}
		
		Now, the proof splits into two cases for which we need different tools, so we treat them separately. 
		
		\vspace{5pt}
		
		\textbf{Case 1: $m=0$}. This means that $\pi$ is a finite mapping.		
		For this step of the proof, we use the methods of Varouchas \cite{var89} to construct a \K\ metric on $\widetilde{X^\prime}$. 
		Note that since $\widetilde{p}$ is holomorphic, finite, open and surjective, it is a ramified covering, so there exists an analytic subset with empty interior $\widetilde{R}\subset \widetilde{X}$ such that $\widetilde{p}_{\restriction \widetilde{X}\setminus \widetilde{R}}:\widetilde{X}\setminus \widetilde{R}\rightarrow \widetilde{X^\prime}\setminus \widetilde{R^\prime}$ is an unramified covering of finite degree $k$.
		For every $(i,\eta)\in I\times H$, consider $\widetilde{p}_*\varphi_i^\eta:V_i^\eta=\widetilde{p}(U_i^\eta)\rightarrow\mathbb{R}$ to be the unique continuous function for which 
		$$\widetilde{p}_*\varphi_i^\eta(x^\prime)=\sum_{x\in\widetilde{p}^{-1}(x^\prime)}\varphi_i^\eta(x)$$
		on $\widetilde{X}\setminus \widetilde{R}$. By \cite[Lemma 3.1.2]{var89}, the functions $\{\widetilde{p}_*\varphi_i^\eta\}_{i\in I, \eta\in H}$ are strictly psh. They are also continuous, of class $\mathcal{C}^\infty$ outside $R^\prime$, and the differences are pluriharmonic outside $R^\prime\cup \widetilde{p}(X_{\sing})$. Moreover, we have $\xi^*\widetilde{p}_*\varphi_i^\eta=e^{c_\xi}\widetilde{p}_*\varphi_i^{\xi^{-1}\eta}$ for every $\xi\in\Deck_{X^\prime}(\widetilde{X^\prime})\simeq\Deck_{X}(\widetilde{X})=H$.
		Next, we apply \cite[Thm.1]{var89} to obtain a \K\ metric $$\widetilde{\tau}^\prime=\reallywidehat{(V_i^\eta,\psi_i^\eta)_{i\in I, \eta\in H}}$$ 
		on $\widetilde{X^\prime}$, with $\mathcal{C}^\infty$ strictly psh functions $\psi_i^\eta$. Since for a fixed $i \in I$, the family of open sets $\{\widetilde{p}(U_i^\eta)\}_{\eta\in H}$ are mutually disjoint, this can be done such that the property $\psi_i^\eta\circ\xi=e^{c_\xi}\psi_i^{\xi^{-1}\eta}$ for every $\eta\in H$ is verified by these new psh functions. 
		Finally, this shows that for every $\xi\in H=\Deck_{X^\prime}(\widetilde{X^\prime})$, we have $\xi^*\widetilde{\tau}^\prime=e^{c_\xi}\widetilde{\tau}^\prime$, and by \cite[Thm.3.10]{PS21}, $X^\prime$ admits lcK metrics.  
		
		\vspace{5pt}
		
		\textbf{Case 2: $m\geq 1$}. There exists an open cover $(V_j)_{j\in J}$ of $X^\prime$ such that:
		\begin{itemize}
			\item $(\pi^\prime)^{-1}(V_j)=\bigcup_{\eta\in H} V_j^\eta$ for every $j\in J$ and for every $j\in J$, $(V_j^\eta)_{\eta\in H}$ are mutually disjoint, and $\xi^{-1} (V_j^\eta)=V^{\xi^{-1}\eta}$ for any $\xi,\eta\in H$
			\item $U_j^\eta:=\widetilde{p}^{-1}(V_j^\eta)$, for every $j\in J, \eta\in H$, and $\xi^{-1} (U_j^\eta)=U^{\xi^{-1}\eta}$ for any $\xi,\eta\in H$.
		\end{itemize}
		If the sets $(V_j)_{j\in J}$ were chosen sufficiently small, then, by \ref{thm2Varouchas}, for every $j\in J, \eta\in H$, there exists an $(m,m)$-form $\chi_j^\eta$ on $U_j^\eta$ such that 
		$$\widetilde{\omega}^{m+1}_{\restriction U_j^\eta}=\ei\partial\overline{\partial}\chi_j^\eta.$$
		Let $\xi\in H$. Since, $\xi^*\widetilde{\omega}=e^{c_\xi}\widetilde{\omega}$, we have
		\begin{multline*} \ei\partial\overline{\partial}(\xi^* \chi_j^\eta) = \xi^*(\ei\partial\overline{\partial}\chi_j^\eta) = \xi^*(\widetilde{\omega}^{m+1}_{\restriction U_j^\eta})= (\xi^*\widetilde{\omega})^{m+1}_{\restriction U_j^{\xi^{-1}\eta}} = \\
			= e^{(m+1)c_\xi}\widetilde{\omega}^{m+1}_{\restriction U_j^{\xi^{-1}\eta}}=  e^{(m+1)c_\xi}\ei\partial\overline{\partial}(\chi_j^{\xi^{-1}\eta})=\ei\partial\overline{\partial}(e^{(m+1)c_\xi}\chi_j^{\xi^{-1}\eta}).
		\end{multline*}
		Hence, we can chose $\chi_j^\eta$ such that they verifiy $\xi^* \chi_j^\eta=e^{(m+1)c_\xi}\chi_j^{\xi^{-1}\eta}$ for any $\xi,\eta\in H$. 
		Then, we define $\widetilde{p}_*\chi_j^\eta:V_j^\eta\rightarrow\mathbb{R}$, 
		$$\widetilde{p}_*\chi_j^\eta(\widetilde{x}^\prime)=\int_{\widetilde{p}^{-1}(\widetilde{x}^\prime)}\chi_j^\eta.$$
		By the above property, we obtain $\xi^* \widetilde{p}_*\chi_j^\eta=e^{(m+1)c_\xi}\widetilde{p}_*\chi_j^{\xi^{-1}\eta}$ for any $\xi,\eta\in H$. Moreover, \ref{prop.3.3.1} together with \ref{prop.3.4.1} ensure that the functions $\widetilde{p}_*\chi_j^\eta$, $j\in J,\eta\in H$, are s.psh (but not necessarily $\mathcal{C}^\infty$) and the difference of any two such functions is pluriharmonic. Finally, applying \cite[Thm.1]{var89}, we obtain a \K\ metric 
		$$\widetilde{\tau}^\prime=\reallywidehat{(V_j^\eta,\psi_j^\eta)_{j\in J, \eta\in H}}$$ 
		on $\widetilde{X^\prime}$. As in \textbf{Case 1}, the choices in the proof of \cite[Thm.1]{var89} can be made such that the property $\xi^*\widetilde{\tau}^\prime=e^{c_\xi}\widetilde{\tau}^\prime$, for every $\xi\in H$ is satisfied, which means that $X^\prime$ admits lcK metrics.
	\end{proof}
	
	\vspace{20pt}
	


\begin{thebibliography}{100}
		
		\bibitem[BV89]{barlet&varouchas89} D. Barlet, J. Varouchas {\em Fonctions holomorphes sur l'espace des cycles}, Bull. Soc. math. France \textbf{117} (1989) 327--341.
		
		\bibitem[Gr62]{grauert} H. Grauert: {\em \"Uber Modifikationen und exzeptionelle analytische Mengen}, Math. Ann. \textbf{146} (1962), 331--368.
		
		\bibitem[Mich]{michalek} M. Micha\l ek: {\em Notes on Kebekus lectures on differential forms on singular spaces}, https://www.impan.pl/~pragacz/kebmich1.pdf
		
		\bibitem[OV]{OV_book} L. Ornea, M. Verbitsky: {\em Principles of Locally Conformally K\" ahler Geometry}, arXiv:2208.07188 (2022).
		
		\bibitem[PS21]{PS21} O. Preda, M. Stanciu: {\em Coverings of locally conformally Kähler complex spaces}, Math. Z. {\bf 298} (2021), 639--651. 
		
		\bibitem[PS22]{PS22} O. Preda, M. Stanciu: {\em Vaisman Theorem for lcK spaces}, to appear in Ann. Sc. Norm. Super. Pisa Cl. Sci. 
		
		\bibitem[Re57]{remmert1957} R. Remmert: {\em Holomorphe und meromorphe Abbildungen komplexer R\" aume}, Math. Ann. \textbf{133} (1957), 328--370.
		
		\bibitem[Vai76]{vaisman1976} I. Vaisman {\em On locally conformal almost K\" ahler manifolds}, Israel J. Math. \textbf{24 No. 3-4} (1976), 338--351.
		
		\bibitem[Vai80]{vaisman1980} I. Vaisman, {\em On locally and globally conformal K\" ahler manifolds}, Trans. Amer. Math. Soc.
		{\bf 262} (1980), 533--542.
		
		\bibitem[Var84]{var84} J. Varouchas: {\em Stabilité de la classe des variétés K\" ahl\' eriennes par certains morphismes propres}, Invent. Math.  {\bf 77}, Issue 1, (1984), 117--127.  
		
		\bibitem[Var86]{var86} J. Varouchas: {\em Sur l'image d'une vari\' et\' e K\" ahl\' erienne compacte}, Fonctions de Plusieurs Variables Complexes {\bf V} (1986), 245--259.  
		
		\bibitem[Var89]{var89}  J. Varouchas: {\em K\" ahler spaces and proper open morphisms}, Math. Ann. {\bf 283} (1989), 13--52.
		
		
	\end{thebibliography}
\end{document}